\DeclareMathOperator{\Ext}{Ext}
\DeclareMathOperator{\Hom}{Hom}
\DeclareMathOperator{\pd}{pd}
\DeclareMathOperator{\add}{add}
\DeclareMathOperator{\End}{End}
\newtheorem{theorem}{Theorem}[section]
\newtheorem{lemma}[theorem]{Lemma}
\newtheorem{prop}[theorem]{Proposition}
\theoremstyle{definition}
\newtheorem{mydef}[theorem]{Definition}
\begin{document}

\thispagestyle{empty}

\title{$\uptau$-Tilting Finite Cluster-Tilted Algebras}
\author{Stephen Zito\thanks{2010 \emph{Mathematics Subject Classification}: 16G60, 16G70 \emph{Key words and phrases}: tilted algebras, cluster-tilted algebras, $\tau$-tilting finite algebras.}}
        
\maketitle

\begin{abstract}
We prove if $B$ is a cluster-tilted algebra, then $B$ is $\tau_B$-tilting finite if and only if $B$ is representation-finite. 
\end{abstract}

\section{Introduction} 
The theory of $\tau$-tilting was introduced by Adachi, Iyama and Reiten in $\cite{AIR}$ as a far-reaching generalization of classical tilting theory for finite dimensional associative algebras.  One of the main classes of objects in the theory is that of $\tau$-rigid modules: a module $M$ over an algebra $\Lambda$ is $\it{\tau_{\Lambda}}$-$\it{rigid}$ if $\mathop{\text{Hom}_{\Lambda}(M,\tau_{\Lambda}M)}=0$, where $\tau_{\Lambda}M$ denotes the Auslander-Reiten translation of $M$; such a module $M$ is called $\it{\tau_{\Lambda}}$-$\it{tilting}$ if the number $|M|$ of non-isomorphic indecomposable summands of $M$ equals the number of isomorphism classes of simple $\Lambda$-modules.  Recently, a new class of algebras were introduced by Demonet, Iyama, Jasso in $\cite{DIJ}$ called $\it{\tau_{\Lambda}}$-$\it{tilting~finite}$ algebras.  They are defined as finite dimensional algebras with only a finite number of isomorphism classes of basic $\tau_{\Lambda}$-tilting modules.
 \par
An obvious sufficient condition for an algebra to be $\tau_{\Lambda}$-titling finite is for it to be representation-finite.  In general, this condition is not necessary.  The aim of this note is to prove for $\it{cluster}$-$\it{tilted~algebras}$, this condition is in fact necessary. 
\par
Tilted algebras are the endomorphism algebras of tilting modules over hereditary algebras, introduced by Happel and Ringel in $\cite{HR}$.  Cluster-tilted algebras are the endomorphism algebras of cluster-tilting objects over cluster categories of hereditary algebras, introduced by Buan, Marsh and Reiten in $\cite{BMR}$. The similarity in the two definitions lead to the following precise relation between tilted and cluster-tilted algebras, which was established in $\cite{ABS}$ by Assem, Br$\ddot{\text{u}}$stle, and Schiffler.
\par
There is a surjective map
\[
\{\text{tilted}~\text{algebras}\}\longmapsto \{\text{cluster-tilted}~\text{algebras}\}
\]
\[
C\longmapsto B=C\ltimes E
\]
where $E$ denotes the $C$-$C$-bimodule $E=\text{Ext}_C^2(DC,C)$ and $C\ltimes E$ is the trivial extension.
\par
This result allows one to define cluster-tilted algebras without using the cluster category.  Using this construction, we show the following.  
\begin{theorem}
\label{Main1}
 Let $B$ be a cluster-tilted algebra.  Then $B$ is $\tau_B$-tilting finite if and only if $B$ is representation-finite.  
 \end{theorem}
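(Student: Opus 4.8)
The plan is to prove the contrapositive of the nontrivial implication: if $B$ is representation-infinite, then $B$ is $\tau_B$-tilting infinite. The reverse implication is the obvious sufficiency noted above, since an algebra with only finitely many indecomposables has only finitely many indecomposable $\tau$-rigid modules, and hence only finitely many basic $\tau$-tilting modules. Throughout I would lean on two facts from $\cite{DIJ}$: that an algebra is $\tau$-tilting finite if and only if it admits only finitely many isomorphism classes of bricks, and that $\tau$-tilting finiteness is inherited by quotient algebras. The latter is transparent from the brick count, since a module over $\Lambda/I$ has the same endomorphism ring computed over $\Lambda$ or over $\Lambda/I$; thus the bricks over $\Lambda/I$ are precisely the bricks over $\Lambda$ annihilated by $I$, and they inject into the bricks of $\Lambda$. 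Consequently, if a quotient of $B$ is $\tau$-tilting infinite, then so is $B$.

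First I would invoke the ABS construction to write $B = C \ltimes E$ with $C = \End_H(T)$ a tilted algebra, $H$ a hereditary algebra, and $T$ a tilting $H$-module. Since the ideal $E$ satisfies $B/E \cong C$, the tilted algebra $C$ is a quotient of $B$, and by the previous paragraph it suffices to show that $C$ is $\tau_C$-tilting infinite. Next I would pin down the representation type: a cluster-tilted algebra is representation-finite precisely when its underlying hereditary type is Dynkin, so $B$ representation-infinite forces $H = kQ$ to be of non-Dynkin type, hence representation-infinite. A connected representation-infinite hereditary algebra has infinitely many indecomposable preprojective modules, and these are directing, hence bricks; so $H$ has infinitely many isomorphism classes of bricks.

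The heart of the argument is to transport these bricks from $H$ to $C$. For this I would use the Brenner--Butler tilting theorem: the module $T$ induces a torsion pair $(\mathcal{T}, \mathcal{F})$ in $\operatorname{mod} H$ and a torsion pair $(\mathcal{Y}, \mathcal{X})$ in $\operatorname{mod} C$, together with category equivalences $\Hom_H(T,-)\colon \mathcal{T} \to \mathcal{Y}$ and $\Ext_H^1(T,-)\colon \mathcal{F} \to \mathcal{X}$. Because $H$ is hereditary, this tilting torsion pair is splitting, so every indecomposable $H$-module, and in particular every brick, lies entirely in $\mathcal{T}$ or entirely in $\mathcal{F}$. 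Hence infinitely many of the bricks of $H$ lie in a single one of the two classes, and since an equivalence of full subcategories preserves endomorphism rings and therefore sends bricks to bricks, the corresponding functor produces infinitely many bricks inside $\operatorname{mod} C$. Thus $C$ is $\tau_C$-tilting infinite, and consequently so is $B$, which completes the contrapositive.

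The main obstacle I anticipate is precisely this brick-transport step: one must guarantee that infinitely many bricks of $H$ survive into $\operatorname{mod} C$, and this is exactly where the splitting property of the tilting torsion pair over a hereditary algebra is essential, since without it a brick could be mixed across $\mathcal{T}$ and $\mathcal{F}$ and need not correspond to a brick of $C$. A secondary point requiring care is the identification of the representation type of $B$ with the Dynkin/non-Dynkin dichotomy of $H$, together with the verification that $C$ is genuinely a quotient of $B$ through the trivial-extension structure; both are what allow the cluster-tilted statement to be reduced to the tilted, and ultimately hereditary, setting.
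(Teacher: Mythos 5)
Your overall strategy (pass to the tilted quotient $C\cong B/E$ via the brick characterization of $\tau$-tilting finiteness from $\cite{DIJ}$, then transport infinitely many bricks from the hereditary algebra $H$ through the tilting functors) is genuinely different from the paper's proof, which works entirely inside $\mathop{\text{mod}}B$ using local slices, the transjective component, and Proposition $\ref{local}$. The quotient reduction, the brick count for $H$, and the Dynkin dichotomy are all fine. But the step you yourself single out as the heart of the argument contains a real error: it is \emph{not} true that, because $H$ is hereditary, the torsion pair $(\mathcal{T},\mathcal{F})$ induced by $T$ in $\mathop{\text{mod}}H$ is splitting. The classical theorem (Happel--Ringel; ASS VI.5) says that when $H$ is hereditary the \emph{induced} torsion pair $(\mathcal{X},\mathcal{Y})$ in $\mathop{\text{mod}}C$ splits; the pair $(\mathcal{T},\mathcal{F})$ in $\mathop{\text{mod}}H$ need not. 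For example, if $T$ is a regular tilting module over a wild hereditary algebra, then for $k\gg 0$ one has $\Hom_H(T,\tau_H^k T)\neq 0$ and $\Ext_H^1(T,\tau_H^k T)\cong D\Hom_H(\tau_H^{k-1}T,T)\neq 0$ by Baer's density theorem, so $\tau_H^kT$ is neither torsion nor torsion-free.

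Worse, the failure strikes exactly the bricks you chose. Tilting modules with both a preprojective and a preinjective summand exist over connected tame hereditary algebras (e.g.\ take the Bongartz completion of $S\oplus S'$ over a suitable orientation of $\widetilde{\mathbb{A}}_5$, where $S$ is simple projective, $S'$ is simple injective, and $\Ext_H^1(S',S)\cong D\Hom_H(S,\tau_H S')=0$); for such $T$, all but finitely many indecomposable preprojectives $P$ satisfy $\Hom_H(T,P)\neq 0$ \emph{and} $\Ext_H^1(T,P)\neq 0$, so they lie in neither $\mathcal{T}$ nor $\mathcal{F}$ and correspond to no $C$-module under either functor. The repair is to use the preinjective bricks instead: for \emph{any} tilting module $T$, writing $T=T_p\oplus T_r\oplus T_i$, one has $\Hom_H(I,\tau_H T_p)=\Hom_H(I,\tau_H T_r)=0$ for every preinjective $I$, and $\Hom_H(I,\tau_H T_i)=0$ for all but finitely many preinjectives $I$, since a fixed preinjective module receives nonzero maps from only finitely many indecomposables of the preinjective component. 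Hence cofinitely many of the (infinitely many) preinjective bricks satisfy $\Ext_H^1(T,I)=0$, i.e.\ lie in $\mathcal{T}=\Gen T$, and the equivalence $\Hom_H(T,-)\colon\mathcal{T}\to\mathcal{Y}$ carries them to infinitely many pairwise non-isomorphic bricks in $\mathop{\text{mod}}C$. With that substitution your argument closes, and it yields a proof along lines quite different from the paper's slice-theoretic one.
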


\section{Notation and Preliminaries}
We now set the notation for the remainder of this paper. All algebras are assumed to be finite dimensional over an algebraically closed field $k$.  If $\Lambda$ is a $k$-algebra then denote by $\mathop{\text{mod}}\Lambda$ the category of finitely generated right $\Lambda$-modules and by $\mathop{\text{ind}}\Lambda$ a set of representatives of each isomorphism class of indecomposable right $\Lambda$-modules.  We denote by $\add M$ the smallest additive full subcategory of $\mathop{\text{mod}}\Lambda$ containing $M$, that is, the full subcategory of $\mathop{\text{mod}}\Lambda$ whose objects are the direct sums of direct summands of the module $M$.  Given $M\in\mathop{\text{mod}}\Lambda$, the projective dimension of $M$ is denoted $\pd_{\Lambda}M$.  We let $\tau_{\Lambda}$ and $\tau^{-1}_{\Lambda}$ be the Auslander-Reiten translations in $\mathop{\text{mod}}\Lambda$.  We let $D$ be the standard duality functor $\Hom_k(-,k)$.  Finally, $\Gamma(\mathop{\text{mod}}\Lambda)$ will denote the Auslander-Reiten quiver of $\Lambda$.
 
 \subsection{Tilted Algebras}
  Tilting theory is one of the main themes in the study of the representation theory of algebras.  Given a $k$-algebra $A$, one can construct a new algebra $B$ in such a way that the corresponding module categories are closely related.  The main idea is that of a tilting module.
   \begin{mydef} Let $A$ be an algebra.  An $A$-module $T$ is a $\emph{partial tilting module}$ if the following two conditions are satisfied: 
   \begin{enumerate}
   \item[($\text{1}$)] $\pd_AT\leq1$.
   \item[($\text{2}$)] $\Ext_A^1(T,T)=0$.
   \end{enumerate}
   A partial tilting module $T$ is called a $\emph{tilting module}$ if it also satisfies the following additional condition:
   \begin{enumerate}
   \item[($\text{3}$)] There exists a short exact sequence $0\rightarrow A\rightarrow T'\rightarrow T''\rightarrow 0$ in $\mathop{\text{mod}}A$ with $T'$ and $T''$ $\in \add T$.
   \end{enumerate}
   \end{mydef}

We now state the definition of a tilted algebra.
 \begin{mydef} Let $A$ be a hereditary algebra with $T$ a tilting $A$-module.  Then the algebra $B=\End_AT$ is called a $\emph{tilted algebra}$.
 \end{mydef}

 \subsection{Cluster categories and cluster-tilted algebras} Let $C=kQ$ be the path algebra of the quiver $Q$ and let $\mathcal{D}^b(\mathop{\text{mod}}C)$ denote the derived category of bounded complexes of $C$-modules.  The $cluster$ $category$ $\mathcal{C}_C$ is defined as the orbit category of the derived category with respect to the functor $\tau_\mathcal{D}^{-1}[1]$, where $\tau_\mathcal{D}$ is the Auslander-Reiten translation in the derived category and $[1]$ is the shift.  Cluster categories were introduced in $\cite{BMMRRT}$, and in $\cite{CCS}$ for type $\mathbb{A}$.
 \par
 An object $T$ in $\mathcal{C}_C$ is called $cluster$-$tilting$ if $\text{Ext}_{\mathcal{C}_C}^1(T,T)=0$ and $T$ has $|Q_0|$ non-isomorphic indecomposable direct summands where $|Q_0|$ is the number of vertices of $Q$.  The endomorphism algebra $\End_{\mathcal{C}_C}T$ of a cluster-tilting object is called a $cluster$-$tilted$ $algebra$ $\cite{BMR}$.

\subsection{Relation extensions}
 Let $C$ be an algebra of global dimension at most 2 and let $E$ be the $C$-$C$-bimodule $E=\text{Ext}_C^2(DC,C)$.  
 \begin{mydef} The $\emph{relation extension}$ of $C$ is the trivial extension $B=C\ltimes E$, whose underlying $C$-module structure is $C\oplus E$, and multiplication is given by $(c,e)(c^{\prime},e^{\prime})=(cc^{\prime},ce^{\prime}+ec^{\prime})$.
 \end{mydef}  
 
 Relation extensions were introduced in $\cite{ABS}$.  In the special case where $C$ is a tilted algebra, we have the following result.
 \begin{theorem}$\emph{\cite[Theorem~3.4]{ABS}}$.  Let C be a tilted algebra.  Then $B=C\ltimes\emph{Ext}_C^2(DC,C)$ is a cluster-tilted algebra.  Moreover all cluster-tilted algebras are of this form.
 \end{theorem}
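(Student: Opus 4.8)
The plan is to realize the relation extension inside the cluster category and compute the endomorphism algebra of $T$ there. Write $C = \End_A T$ with $A = kQ$ hereditary and $T$ a tilting $A$-module, and regard $T$ as an object of $\mathcal{C}_A = \mathcal{D}^b(\mathop{\text{mod}}A)/\tau_{\mathcal{D}}^{-1}[1]$. Since $T$ is tilting over a hereditary algebra it is a cluster-tilting object, so $\End_{\mathcal{C}_A}T$ is by definition a cluster-tilted algebra; the content of the first assertion is that this algebra is isomorphic to $C\ltimes E$ with $E = \Ext_C^2(DC,C)$. I would begin from the orbit-category description
\[
\Hom_{\mathcal{C}_A}(T,T) = \bigoplus_{i\in\mathbb{Z}}\Hom_{\mathcal{D}^b(\mathop{\text{mod}}A)}(T,F^iT), \qquad F = \tau_{\mathcal{D}}^{-1}[1],
\]
and use that, since $A$ is hereditary and $T$ is a module, the only summands that survive are $i=0$ and $i=1$.

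Next I would transport everything along the derived equivalence $\text{RHom}_A(T,-)\colon\mathcal{D}^b(\mathop{\text{mod}}A)\xrightarrow{\ \sim\ }\mathcal{D}^b(\mathop{\text{mod}}C)$ induced by the tilting module, under which $T$ goes to $C$ because all higher self-extensions of $T$ vanish. The $i=0$ summand is then $\End_C C = C$. For the $i=1$ summand I would use that any derived equivalence commutes with Serre functors: on $\mathcal{D}^b(\mathop{\text{mod}}C)$ the Serre functor is the derived Nakayama functor $\nu$, so $\tau_{\mathcal{D}} = \nu[-1]$ and hence $F$ corresponds to $\nu^{-1}[2]$. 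Since $C$ is projective and $\nu^{-1} = \text{RHom}_C(DC,-)$, this yields
\[
\Hom_{\mathcal{D}^b(\mathop{\text{mod}}C)}(C,FC) = H^2\big(\text{RHom}_C(DC,C)\big) = \Ext_C^2(DC,C) = E,
\]
so as a vector space $\End_{\mathcal{C}_A}T \cong C\oplus E$.

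To upgrade this to an algebra isomorphism I would trace the composition in the orbit category through the equivalence: the $C\times C$ product recovers the multiplication of $C$, the mixed products $C\times E$ and $E\times C$ recover the $C$-$C$-bimodule structure of $E$, and the product $E\times E$ vanishes because it factors through $\Hom_{\mathcal{D}^b(\mathop{\text{mod}}A)}(T,F^2T)$, which is zero by the two-term reduction. These are exactly the relations defining the multiplication $(c,e)(c',e') = (cc',ce'+ec')$ of the trivial extension, giving $\End_{\mathcal{C}_A}T \cong C\ltimes E$.

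Finally, for the ``moreover'' statement I would invoke the structure theory of cluster categories: every cluster-tilting object of $\mathcal{C}_A$ is, up to the derived equivalences relating the hereditary algebras of a fixed derived equivalence class, a genuine tilting module $T'$ over a hereditary algebra $A'$ with $\mathcal{C}_{A'}\cong\mathcal{C}_A$. The corresponding cluster-tilted algebra then equals $\End_{\mathcal{C}_{A'}}T'$, and the first part applies to the tilted algebra $C'=\End_{A'}T'$. I expect the main obstacle to be the $i=1$ computation at the level of bimodules rather than vector spaces — identifying $\Hom(T,FT)$ with $\Ext_C^2(DC,C)$ compatibly with the $C$-actions — together with the verification that the extension part squares to zero; both rest on carefully tracking the Serre-functor compatibility and the orbit-category composition through the derived equivalence.
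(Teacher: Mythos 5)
This statement is imported by the paper from \cite[Theorem~3.4]{ABS} and quoted without proof, so there is no internal argument to compare against; I can only measure your proposal against the original Assem--Br\"ustle--Schiffler proof, and in substance it is the same argument. Their published proof proceeds exactly along your lines: decompose $\Hom_{\mathcal{C}_A}(T,T)=\bigoplus_i \Hom_{\mathcal{D}^b(\mathop{\text{mod}}A)}(T,F^iT)$, observe that only $i=0,1$ contribute, identify the $i=1$ piece with $\Ext_C^2(DC,C)$ via the derived equivalence $\mathrm{RHom}_A(T,-)$ and the compatibility $F=\nu^{-1}[2]$ with the Serre functor, and conclude the trivial-extension multiplication from the vanishing of $\Hom_{\mathcal{D}^b(\mathop{\text{mod}}A)}(T,F^2T)$, which kills all products from $E\times E$. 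Your reduction to $i\in\{0,1\}$ needs one small refinement: $F^iT=\tau_{\mathcal{D}}^{-i}T[i]$ need not be a shifted module in a single degree when $T$ has injective or projective summands (e.g.\ an injective summand $\nu P$ contributes $P[2]$ to $FT$), but the extra pieces die anyway because $\Ext_A^j=0$ for $j\geq 2$ over hereditary $A$, so the conclusion stands. The point you correctly flag as the main obstacle --- matching the $C$-$C$-bimodule structure on $\Hom_{\mathcal{D}^b(\mathop{\text{mod}}A)}(T,FT)$ with that of $\Ext_C^2(DC,C)$ --- is precisely where \cite{ABS} spend their technical effort, so your outline is honest about where the work lies. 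For the ``moreover'' direction, the external ingredient you invoke is exactly the one used in \cite{ABS}, namely the result of \cite{BMMRRT} that every cluster-tilting object in $\mathcal{C}_A$ is induced by a tilting module over a hereditary algebra derived equivalent to $A$; with that cited, your reduction to the first part is complete. In short: the proposal is correct as a proof sketch, coincides with the source's strategy, and its two acknowledged gaps (bimodule compatibility and the $E^2=0$ bookkeeping) are genuine but fillable exactly as in \cite{ABS}.
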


\subsection{Slices and local slices} 
 \begin{mydef}
  A $\it{slice}$ $\Sigma$ in $\Gamma(\mathop{\text{mod}}A)$ is a set of indecomposable $A$-modules such that
 \begin{enumerate}
\item[($\text{1}$)] $\Sigma$ is sincere.
\item[($\text{2}$)] Any path in $\mathop{\text{mod}}A$ with source and target in $\Sigma$ consists entirely of modules in $\Sigma$.
\item[($\text{3}$)] If $M$ is an indecomposable non-projective $A$-module then at most one of $M$ , $\tau_AM$ belongs to $A$.
\item[($\text{4}$)] If $M\rightarrow S$ is an irreducible morphism with $M,S\in\mathop{\text{ind}}A$ and $S\in\Sigma$, then either $M$ belongs to $\Sigma$ or $M$ is non-injective and $\tau_A^{-1}M$ belongs to $\Sigma$.
\end{enumerate}  
\end{mydef}  
The existence of slices is used to characterize tilted algebras in the following way.
 \begin{theorem}$\emph{\cite{R}}$
 \label{tiltslice}
 Let $B=\emph{End}_AT$ be a tilted algebra.  Then the class of $B$-modules $\emph{Hom}_A(T,DA)$ forms a slice in $\mathop{\text{mod}}B$.  Conversely, any slice in any module category is obtained in this way.
 \end{theorem}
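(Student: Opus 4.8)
The plan is to establish the two halves separately, using the Brenner--Butler tilting theorem as the main tool. Write $n$ for the number of simple modules and $DA=\bigoplus_{i=1}^{n}I_i$ for the decomposition of the injective cogenerator into indecomposables. Attached to $T$ are the torsion pair $(\mathcal{T},\mathcal{F})$ in $\operatorname{mod}A$, with $\mathcal{T}=\Gen T=\{X:\Ext_A^1(T,X)=0\}$ and $\mathcal{F}=\{X:\Hom_A(T,X)=0\}$, and the induced torsion pair $(\mathcal{X},\mathcal{Y})$ in $\operatorname{mod}B$ with $\mathcal{X}$ torsion and $\mathcal{Y}$ torsion-free. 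Brenner--Butler supplies the quasi-inverse equivalences $\Hom_A(T,-)\colon\mathcal{T}\to\mathcal{Y}$ and $\Ext_A^1(T,-)\colon\mathcal{F}\to\mathcal{X}$; and, because $A$ is hereditary, the pair $(\mathcal{X},\mathcal{Y})$ is \emph{splitting}, so each indecomposable $B$-module lies in $\mathcal{X}$ or in $\mathcal{Y}$ and $\Hom_B(\mathcal{X},\mathcal{Y})=0$. These facts are the skeleton on which the whole argument hangs.

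For the forward direction I would first settle the numerical part. Each $I_i$ is injective, so $\Ext_A^1(T,I_i)=0$, whence $DA\in\mathcal{T}$ and every $\Hom_A(T,I_i)\in\mathcal{Y}$; since $\Hom_A(T,-)$ is an equivalence on $\mathcal{T}$, these $n$ modules are indecomposable and pairwise non-isomorphic, so $\Sigma$ has exactly $n$ elements. For sincerity (axiom (1)) I would use the natural isomorphism $\Hom_A(T_j,DA)\cong DT_j$; reading off dimension vectors, the coordinate of $\Hom_A(T,DA)$ at the vertex corresponding to the summand $T_j$ equals $\dim_k T_j>0$, so $\Sigma$ meets every vertex and is sincere.

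The heart of the forward direction is axioms (2)--(4), which I would obtain by transporting Auslander--Reiten structure through the equivalence and analyzing the interface between $\mathcal{Y}$ and $\mathcal{X}$. The crucial input on the hereditary side is that the indecomposable injectives $I_i$ form a section of the preinjective component of $\Gamma(\operatorname{mod}A)$: no $\tau_A$-orbit meets the $I_i$ twice in succession (indeed $\tau_A^{-1}$ annihilates an injective), and irreducible maps into an $I_i$ have source either injective or of the form $\tau_A^{-1}(\text{injective})$. Since $\Hom_A(T,-)$ carries almost split sequences with all terms in $\mathcal{T}$ to almost split sequences in $\mathcal{Y}$, these local features pass to $\Sigma$. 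Axiom (3) then follows because the $\tau_B$-translate of a slice module is never again a slice module. For axiom (4), an irreducible map $M\to S$ with $S\in\Sigma$ forces $M\in\mathcal{Y}$ (there are no maps $\mathcal{X}\to\mathcal{Y}$), so it is the image of an irreducible map into the injective corresponding to $S$, and the section property yields the required dichotomy: either $M\in\Sigma$, or $M$ is non-injective and $\tau_B^{-1}M\in\Sigma$. For axiom (2), a path between two slice modules cannot leave $\mathcal{Y}$ for $\mathcal{X}$ and return, again because $\Hom_B(\mathcal{X},\mathcal{Y})=0$, so it lifts to a path between injectives in $\operatorname{mod}A$, which stays inside the section by convexity; the connecting sequences bridging $\mathcal{Y}$ and $\mathcal{X}$ are exactly what must be checked not to create shortcuts.

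For the converse, let $\Sigma$ be an arbitrary slice in $\operatorname{mod}\Lambda$ and set $T_\Sigma=\bigoplus_{X\in\Sigma}X$. I would first prove the structural claim that $A:=\End_\Lambda(T_\Sigma)$ is hereditary: here the convexity axiom (2) controls the compositions of irreducible maps among slice modules, while axioms (3) and (4) force the component containing $\Sigma$ to have the standard $\mathbb{Z}\Delta$ shape, so that $A$ is the path algebra of the quiver read off from $\Sigma$, with no relations. Reversing roles, I would then show that $\Lambda\cong\End_A T$ for a tilting $A$-module $T$, the sincerity of $\Sigma$ guaranteeing that $T$ is a genuine tilting and not merely a partial tilting module, and that under this identification $\Sigma$ is recovered as $\Hom_A(T,DA)$, closing the loop. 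I expect the main obstacle to lie precisely at the two interfaces: in the forward direction, tracking how irreducible maps and short paths behave as they cross from $\mathcal{Y}$ into $\mathcal{X}$ through the connecting sequences, so as to pin down axioms (2) and (4); and in the converse, proving heredity of $\End_\Lambda(T_\Sigma)$ from the slice axioms alone. Both amount to a careful bookkeeping of almost split sequences under the Brenner--Butler equivalence, which is the technical core of Ringel's argument.
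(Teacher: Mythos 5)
First, a point of comparison: the paper does not prove Theorem \ref{tiltslice} at all --- it is imported from Ringel \cite{R} (see also Assem--Simson--Skowro\'nski, Chapters VIII.3 and VIII.5) --- so your attempt can only be measured against the standard argument. Your outline does follow that standard route: the Brenner--Butler equivalences, the splitting of the induced torsion pair $(\mathcal{X},\mathcal{Y})$ because $A$ is hereditary, transport of the injective slice of $\mathop{\text{mod}}A$ through $\Hom_A(T,-)$, and the converse via the slice module $T_\Sigma$. Several steps are correct and complete as you state them: the count of $n$ indecomposable summands, sincerity via $\Hom_A(T_j,DA)\cong DT_j$, and the observation that $\Hom_B(\mathcal{X},\mathcal{Y})=0$ forces any path between slice modules to stay inside $\mathcal{Y}$; combined with the hereditary fact that quotients of injectives are injective (so every indecomposable successor of an injective is injective), this does give axiom (2) cleanly.

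There are, however, two genuine gaps, located exactly where you defer to ``bookkeeping.'' (1) In the forward direction you invoke ``$\Hom_A(T,-)$ carries almost split sequences with all terms in $\mathcal{T}$ to almost split sequences in $\mathcal{Y}$,'' but the sequences you actually need --- those ending at an injective $I$, or starting at a module $Z$ admitting an irreducible map $Z\to I$ --- need not have all terms in $\mathcal{T}$: $\tau_AI$ and $\tau_AZ$ may lie in $\mathcal{F}$. What the standard proof supplies here, and your sketch omits, is the connecting isomorphism $\tau_B\Hom_A(T,Z)\cong\Hom_A(T,\tau_AZ)$, valid when both $Z$ and $\tau_AZ$ lie in $\mathcal{T}$, together with a separate treatment of the Ext-projectives of $\mathcal{Y}$; without this, the dichotomy in axiom (4) (``$M\in\Sigma$ or $\tau_B^{-1}M\in\Sigma$'') is asserted rather than proved. (2) In the converse, your mechanism for heredity --- ``axioms (3) and (4) force the component containing $\Sigma$ to have the standard $\mathbb{Z}\Delta$ shape, so $A$ is the path algebra of the quiver read off from $\Sigma$, with no relations'' --- would fail as stated: a component containing a slice contains projectives and injectives and is not a stable translation quiver of shape $\mathbb{Z}\Delta$, and even a full embedding into $\mathbb{Z}\Delta$ does not by itself kill relations in $\End_{\Lambda}(T_\Sigma)$; ``no relations'' is essentially a restatement of heredity, not a proof of it. Moreover, you never verify that $T_\Sigma$ is a tilting $\Lambda$-module --- $\Hom_\Lambda(T_\Sigma,\tau_\Lambda T_\Sigma)=0$ from axioms (2) and (3) via the Auslander--Reiten formula gives rigidity, while $\pd_\Lambda T_\Sigma\leq 1$ and the coresolution of $\Lambda$ consume sincerity and axiom (4) --- and this is the step where the slice axioms actually do their work; once it is done, $\Lambda\cong\End_AT$ is the double-centralizer property from the tilting theorem, not something to be re-derived. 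So the skeleton is the right one, but the two interfaces you flag as the technical core are precisely where the proof does not yet exist, and the $\mathbb{Z}\Delta$ shortcut in the converse is a wrong turn.
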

 The following notion of local slices was introduced in $\cite{ABS2}$ in the context of cluster-tilted algebras.  We say a path $X=X_0\rightarrow X_1\rightarrow X_2\rightarrow\dots\rightarrow X_s=Y$ in $\Gamma(\mathop{\text{mod}}A)$ is $\it{sectional}$ if, for each $i$ with $0<i<s$, we have $\tau_AX_{i+1}\neq X_{i-1}$. 
  \begin{mydef}
 A $\it{local~slice}$ $\Sigma$ in $\Gamma(\mathop{\text{mod}}A)$ is a set of indecomposable $A$-modules inducing a connected full subquiver of $\Gamma(\mathop{\text{mod}}A)$ such that
 \begin{enumerate}
\item[($\text{1}$)] If $X\in\Sigma$ and $X\rightarrow Y$ is an arrow in $\Gamma(\mathop{\text{mod}}A)$, then either $Y$ or $\tau_AY\in\Sigma$.
\item[($\text{2}$)] If $Y\in\Sigma$ and $X\rightarrow Y$ is an arrow in $\Gamma(\mathop{\text{mod}}A)$, then either $X$ or $\tau_A^{-1}X\in\Sigma$.
\item[($\text{3}$)] For every sectional path $X=X_0\rightarrow X_1\rightarrow X_2\rightarrow\dots\rightarrow X_s=Y$ in $\Gamma(\mathop{\text{mod}}A)$ with $X,Y\in\Sigma$, we have $X_i\in\Sigma$, for $i=0,1,\dots,s.$
\item[($\text{4}$)] The number of indecomposable $A$-modules in $\Sigma$ equals the number of non-isomorphic summands of $T$, where $T$ is a tilting $A$-module.
\end{enumerate}  
 \end{mydef}
 There is a relationship between tilted and cluster-tilted algebras given in terms of slices and local slices.
 \begin{theorem}$\emph{\cite[Corollary~20]{ABS2}}$
 \label{clusterlocal}
  Let $C$ be a tilted algebra and $B$ the corresponding cluster-tilted algebra.  Then any slice in $\mathop{\emph{mod}}C$ embeds as a local slice in $\mathop{\emph{mod}}B$ and any local slice $\Sigma$ in $\mathop{\emph{mod}}B$ arises in this way.
 \end{theorem}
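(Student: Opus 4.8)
My plan is to route the argument through the cluster category of the hereditary algebra underlying $C$ and to translate Ringel's characterisation of slices (Theorem \ref{tiltslice}) into the local-slice axioms, using the explicit comparison between the Auslander--Reiten quivers of that cluster category and of $B$. Since $C$ is tilted, write $C=\End_A T$ with $A=kQ$ hereditary and $T$ a tilting $A$-module, let $\mathcal{C}=\mathcal{C}_A$ be the cluster category, and let $\bar T$ be the cluster-tilting object of $\mathcal{C}$ obtained from $T$; by the theorem of Assem--Br\"ustle--Schiffler recalled above (\cite{ABS}) the cluster-tilted algebra attached to $C$ is $B=\End_{\mathcal{C}}\bar T$. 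The engine of the proof is the functor $G=\Hom_{\mathcal{C}}(\bar T,-)\colon \mathcal{C}\to \mathop{\text{mod}}B$, and I would record three of its standard properties (\cite{BMR}): (i) $G$ is full and dense; (ii) recalling that $\tau_{\mathcal{C}}$ coincides with the shift, so that $\Hom_{\mathcal{C}}(\bar T,\tau_{\mathcal{C}}\bar T)=\Ext^1_{\mathcal{C}}(\bar T,\bar T)=0$, the functor $G$ vanishes exactly on $\add(\tau_{\mathcal{C}}\bar T)$ and induces a bijection between the indecomposables of $\mathcal{C}$ outside $\add(\tau_{\mathcal{C}}\bar T)$ and the indecomposable $B$-modules; (iii) $G$ sends an almost split sequence of $\mathcal{C}$ to an almost split sequence of $\mathop{\text{mod}}B$ whenever none of its terms lies in $\add(\tau_{\mathcal{C}}\bar T)$. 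Together (ii)--(iii) say that $\Gamma(\mathop{\text{mod}}B)$ is obtained from $\Gamma(\mathcal{C})$ by deleting the $n=|Q_0|$ vertices of $\tau_{\mathcal{C}}\bar T$ with their incident arrows, and that $\tau_B$ agrees with $\tau_{\mathcal{C}}$ on every indecomposable that is non-projective and whose $\tau_{\mathcal{C}}$-translate survives this deletion.

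For the forward implication, let $\Sigma$ be a slice in $\mathop{\text{mod}}C$. By Theorem \ref{tiltslice} I may assume $\Sigma=\Hom_A(T,DA)$, the image under the tilting functor of the canonical slice of injective $A$-modules. The injectives form a section of $\Gamma(\mathop{\text{mod}}A)$, which embeds as a section $\tilde\Sigma$ of $\Gamma(\mathcal{C})$; applying $G$ identifies $G(\tilde\Sigma)$ with $\Sigma$ regarded, via the surjection $B\twoheadrightarrow C$, as a set of $B$-modules, and forces $\tilde\Sigma\cap\add(\tau_{\mathcal{C}}\bar T)=\emptyset$ since $\Sigma$ has $n$ nonzero members. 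It then remains to read off the four local-slice axioms from the fact that $\tilde\Sigma$ is a section of $\Gamma(\mathcal{C})$. Axioms (1) and (2) follow because an arrow of $\Gamma(\mathop{\text{mod}}B)$ incident to $\Sigma$ lifts to an arrow of $\Gamma(\mathcal{C})$ incident to $\tilde\Sigma$, where the section property gives the alternative ``$Y\in\tilde\Sigma$ or $\tau_{\mathcal{C}}Y\in\tilde\Sigma$'', which (ii)--(iii) convert into ``$Y\in\Sigma$ or $\tau_B Y\in\Sigma$''. Axiom (3) follows since a sectional path of $\Gamma(\mathop{\text{mod}}B)$ between two members of $\Sigma$ is in particular a sectional path of $\Gamma(\mathcal{C})$ between members of the section $\tilde\Sigma$, hence stays inside $\tilde\Sigma$ and therefore inside $\Sigma$; and axiom (4), the count $|\Sigma|=n$, holds because a slice of a tilted algebra has exactly $n$ indecomposables.

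For the converse, let $\Sigma$ be a local slice in $\mathop{\text{mod}}B$. Using the bijection of (ii) I would lift $\Sigma$ to a set $\tilde\Sigma$ of $n$ indecomposables of $\mathcal{C}$ lying outside $\add(\tau_{\mathcal{C}}\bar T)$, and check that the local-slice axioms (1)--(3) are exactly what is needed for $\tilde\Sigma$ to be a section of $\Gamma(\mathcal{C})$. A section of $\Gamma(\mathcal{C})$ is the image of a complete slice of $\mathop{\text{mod}}A$, hence determines a tilting $A$-module $T'$ and a tilted algebra $C'=\End_A T'$ whose relation extension is again $B$; by Theorem \ref{tiltslice} the corresponding slice of $\mathop{\text{mod}}C'$ is $\Hom_A(T',DA)$, and by construction it embeds back to the given $\Sigma$. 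Thus every local slice arises from a slice, completing the correspondence.

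The main obstacle is property (iii) and the precise comparison of $\tau_B$ with $\tau_{\mathcal{C}}$ along the boundary created by deleting $\add(\tau_{\mathcal{C}}\bar T)$: precisely there a non-projective $B$-module whose $\tau_{\mathcal{C}}$-translate is a deleted summand becomes projective, so $\tau_B$ need not equal $\tau_{\mathcal{C}}$, and one must verify that the built-in alternative ``$Y$ or $\tau Y$'' in local-slice axioms (1)--(2) absorbs this discrepancy rather than being violated by it. Carrying out this boundary bookkeeping carefully, and confirming that the deletion neither disconnects the section nor interferes with the sectional-path condition (3), is where the genuine content lies; everything else is a formal consequence of the properties of $G$.
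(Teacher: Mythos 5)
The paper does not prove this statement at all: it is quoted verbatim from \cite[Corollary~20]{ABS2}, so the only meaningful comparison is with the argument in that source, which your forward direction essentially reproduces. That half of your sketch is sound: the Buan--Marsh--Reiten equivalence $\mathcal{C}/\add(\tau_{\mathcal{C}}\bar T)\simeq\mathop{\text{mod}}B$, the lift of the canonical slice $\Hom_A(T,DA)$ to the section of injectives, and the count forcing $\tilde\Sigma\cap\add(\tau_{\mathcal{C}}\bar T)=\emptyset$ are all correct, modulo the small point that ``I may assume $\Sigma=\Hom_A(T,DA)$'' requires replacing the fixed pair $(A,T)$ by another realization $(A',T')$ of the same $C$ adapted to $\Sigma$ --- harmless, since the relation extension depends only on $C$. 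Moreover, the ``main obstacle'' you flag is not actually one: $G(X)$ is projective exactly when $X\in\add\bar T$, in which case $\tau_{\mathcal{C}}X\in\add(\tau_{\mathcal{C}}\bar T)$ is deleted, so $\tau_B G\cong G\tau_{\mathcal{C}}$ holds wherever both sides are defined; this is already in \cite{BMR}, and it also shows a path in $\Gamma(\mathop{\text{mod}}B)$ is sectional if and only if its lift is.

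The genuine gap is in the converse, at the sentence ``a section of $\Gamma(\mathcal{C})$ is the image of a complete slice of $\mathop{\text{mod}}A$, hence determines a tilting $A$-module $T'$ and a tilted algebra $C'=\End_AT'$ whose relation extension is again $B$.'' This fails twice. First, a section of the transjective component of $\mathcal{C}_A$ need not come from a slice in $\mathop{\text{mod}}A$: it may contain shifted projectives $P[1]$. Second, and fatally, even when it does, the relation extension of $\End_AT'$ is $\End_{\mathcal{C}_A}T'$, the cluster-tilted algebra of the \emph{other} cluster-tilting object $\bigoplus\tilde\Sigma$, not $B=\End_{\mathcal{C}_A}\bar T$. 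Indeed, if $T'$ is the sum of a complete slice over the hereditary algebra $A$, then $C'=\End_AT'$ is itself hereditary, so $\Ext_{C'}^2(DC',C')=0$ and its relation extension is $C'$ itself --- your construction would force $B$ to be hereditary, which is absurd in general. The tilted algebra must instead be built from $\bar T$ as seen from the new section: $H:=\End_{\mathcal{C}}(\bigoplus\tilde\Sigma)$ is hereditary with $\mathcal{C}_A\simeq\mathcal{C}_H$, and the required algebra is $\End_H(\Hom_{\mathcal{C}}(\bigoplus\tilde\Sigma,\bar T))$; equivalently, \cite{ABS2} takes $C'=B/\operatorname{Ann}_B\Sigma$, verifies that $\Sigma$ is a slice in $\mathop{\text{mod}}C'$ (whence $C'$ is tilted, by the converse part of Theorem~\ref{tiltslice}), and then proves $B\cong C'\ltimes\Ext_{C'}^2(DC',C')$. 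Without some such construction and the verification that the relation extension recovers $B$, the claim that every local slice ``arises in this way'' is unproved.
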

  The existence of local slices in a cluster-tilted algebra gives rise to the following definition.  The unique connected component of $\Gamma(\mathop{\text{mod}}B)$ that contains local slices is called the $\it{transjective~component}$.  
  
The next result says a slice in a tilted algebra together with its $\tau$ and $\tau^{-1}$ translates full embeds in the cluster-tilted algebra.

\begin{prop}$\emph{\cite[Proposition~3]{ABS3}}$
\label{Same}
Let $C$ be a tilted algebra, $\Sigma$ a slice, $M\in\Sigma$, and $B$ the corresponding cluster-tilted algebra.
\begin{enumerate} 
\item[($\text{1}$)] $\tau_CM\cong\tau_BM$.
\item[($\text{2}$)] $\tau_C^{-1}M\cong\tau_B^{-1}M$.
\end{enumerate}
\end{prop}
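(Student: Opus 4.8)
The plan is to compute $\tau_B M$ directly from the relation-extension structure $B=C\ltimes E$ and compare it with $\tau_C M$. Write $p\colon B\to C$ for the canonical split surjection with kernel $E$, and view every $C$-module, in particular $M$, as a $B$-module by letting $E$ act as zero; this is exactly the embedding of $\Sigma$ as a local slice furnished by Theorem \ref{clusterlocal}. Since $-\otimes_C B$ carries the indecomposable projective $e_iC$ to $e_iB$, it sends $C$-projectives to $B$-projectives. Hence, starting from a minimal projective presentation $P_1\xrightarrow{f}P_0\to M\to 0$ over $C$ and applying the right-exact functor $-\otimes_C B$, I obtain an exact sequence $P_1\otimes_C B\to P_0\otimes_C B\to M\otimes_C B\to 0$ of $B$-modules whose first two terms are projective.

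The first reduction is the identity $M\otimes_C B\cong M\oplus(M\otimes_C E)$ of right $C$-modules, coming from $B=C\oplus E$ as a bimodule. Thus, provided $M\otimes_C E=0$, the displayed sequence is an honest projective presentation of the $B$-module $M$. I would then compute the transpose over $B$: using the adjunction $\Hom_B(X\otimes_C B,B)\cong\Hom_C(X,{}_CB)$ and the decomposition ${}_CB=C\oplus E$ of $B$ as a left $C$-module, each term splits as $\Hom_C(P_i,C)\oplus\Hom_C(P_i,E)$, and since the matrix of $f$ has entries in $C$ the induced map is block diagonal. Taking cokernels yields
\[
\operatorname{Tr}_B M\;\cong\;\operatorname{Tr}_C M\;\oplus\;\coker\!\big(\Hom_C(P_0,E)\to\Hom_C(P_1,E)\big),
\]
and, because slice modules have projective dimension at most one (so that $0\to P_1\to P_0\to M\to 0$ is a resolution), the second summand is precisely $\Ext_C^1(M,E)$. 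Applying $D$ gives $\tau_B M\cong\tau_C M\oplus D\Ext_C^1(M,E)$, so part (1) reduces to the vanishing of $M\otimes_C E$ and of $\Ext_C^1(M,E)$.

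The heart of the argument, and the step I expect to be the main obstacle, is therefore the homological lemma that a slice module $M$ satisfies $M\otimes_C E=0$ and $\Ext_C^1(M,E)=0$. Here I would exploit that $\Sigma=\Hom_A(T,DA)$ for the underlying hereditary algebra $A$ and tilting module $T$ (Theorem \ref{tiltslice}), together with the fact that the slice module is a tilting $C$-module and hence each $M\in\Sigma$ has $\pd_C M\le 1$, to control both bimodule invariants through the defining formula $E=\Ext_C^2(DC,C)$; concretely I would rewrite $M\otimes_C E$ and $\Ext_C^1(M,E)$ in terms of second extension groups and use that these vanish for modules lying on a slice. The delicate points will be keeping careful track of the bimodule structures and of minimality, so that the extra injective summands produced by a non-minimal presentation do not spuriously appear.

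Finally, part (2) follows from part (1) by duality. The opposite $B^{\mathrm{op}}$ is the cluster-tilted algebra associated to the tilted algebra $C^{\mathrm{op}}$, and $D\Sigma$ is a slice in $\operatorname{mod}C^{\mathrm{op}}$; since $\tau^{-1}_\Lambda\cong D\,\tau_{\Lambda^{\mathrm{op}}}\,D$ for any algebra $\Lambda$, applying part (1) over $C^{\mathrm{op}}$ and $B^{\mathrm{op}}$ to $DM$ yields $\tau_C^{-1}M\cong\tau_B^{-1}M$. As a consistency check, the whole statement also follows conceptually from the cluster-category model: both $\tau_C$ and $\tau_B$ restrict to the Auslander--Reiten translation of $\mathcal{C}_C$ on the transjective component containing $\Sigma$, which is the qualitative content of Theorem \ref{clusterlocal}.
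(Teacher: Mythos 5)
First, a point of reference: the paper itself does not prove this proposition; it imports it from \cite[Proposition~3]{ABS3}. So your attempt can only be measured against the argument in that reference, and your overall mechanism is in fact the same one: induce along the split surjection $B=C\ltimes E\to C$ and identify conditions under which the Auslander--Reiten translate survives (in \cite{ABS3} this is packaged via the Assem--Zacharia criterion for split-by-nilpotent extensions, whose two hypotheses, $M\otimes_CE=0$ and $\Hom_C(E,\tau_CM)=0$, are exactly your two vanishing conditions, since $\Ext^1_C(M,E)\cong D\Hom_C(E,\tau_CM)$ when $\pd_CM\leq 1$). Your transpose computation is essentially sound, with two repairable inaccuracies. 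First, the decomposition $\Hom_B(P_i\otimes_CB,B)\cong\Hom_C(P_i,C)\oplus\Hom_C(P_i,E)$ is a decomposition of left $C$-modules only: left multiplication by $E$ carries the $C$-block into the $E$-block, so as left $B$-modules you only get an exact sequence $\Ext^1_C(M,E)\to\operatorname{Tr}_BM\to\operatorname{Tr}_CM\to 0$; this is harmless precisely in the case $\Ext^1_C(M,E)=0$ that you need. Second, minimality can be handled cleanly: a non-minimal presentation only adds a projective direct summand $Q$ to the computed transpose, and such a $Q$ is annihilated by $E$, hence is a projective $C$-module and a direct summand of $\operatorname{Tr}_CM$, which has none; so $Q=0$. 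The duality argument for part (2) is fine once part (1) is established.

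The genuine gap is the lemma you yourself flag as the heart: the two vanishing statements are asserted, not proved, and your proposed mechanism only covers one of them. For the tensor condition the sketch does complete: since $\pd_C DC\leq 2$, right exactness of $M\otimes_C-$ applied to a projective resolution of $DC$ gives $M\otimes_C\Ext^2_C(DC,C)\cong\Ext^2_C(DC,M)$, which vanishes because slice modules satisfy $\id_CM\leq 1$. But $\Ext^1_C(M,E)=0$ is \emph{not} an instance of ``second extension groups vanish for modules lying on a slice'': it is not a second extension group of $M$, and neither $\pd_CM\leq1$ nor $\id_CM\leq1$ bears on it directly (no spectral-sequence collapse rescues this either, since $\operatorname{Tor}_1^C(M,DC)\cong D\Ext^1_C(M,C)$ is generally nonzero). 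What is actually needed is positional: by the Auslander--Reiten formula the condition is $\Hom_C(E,\tau_CM)=0$, and this holds because $E$ lies in the torsion class $\mathcal{X}(T)$ of $\mathop{\text{mod}}C$ (for instance via the dual rewriting $E\otimes_CT\cong\Ext^2_C(DC,T)=0$, using that $DT\cong\Hom_A(T,DA)$ consists of slice modules of projective dimension at most one), while $\tau_CM$, being a proper predecessor of the complete slice, lies in the torsion-free class $\mathcal{Y}(T)$, so that $\Hom_C(\mathcal{X},\mathcal{Y})=0$ finishes. This torsion-pair step, which is exactly where the hypothesis ``$M$ lies on a slice'' (rather than any homological bound on $M$ alone) enters, is absent from your proposal, and without it the argument establishes nothing that would distinguish slice modules from, say, a simple projective $C$-module off the slice, for which the conclusion $\tau_BM\cong\tau_CM$ genuinely fails.
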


In $\cite{ABS2}$, the authors gave an example of an indecomposable transjective module over a cluster-tilted algebra that does not lie on a local slice.  It was proved in $\cite{ASS1}$ the number of such modules is finite.
\begin{prop}$\emph{\cite[Corollary~3.8]{ASS1}}$
\label{local}
Let $B$ be a cluster-tilted algebra.  Then the number of isomorphism classes of indecomposable transjective $B$-modules that do not lie on a local slice is finite.
\end{prop}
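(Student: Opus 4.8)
The plan is to reduce to the representation-infinite case and then to show that, inside the transjective component, everything outside a bounded central region is swept out by genuine local slices. First I would dispose of the trivial case: if $B$ is representation-finite then $\Gamma(\mathop{\text{mod}}B)$ has only finitely many vertices and there is nothing to prove, so I assume $B$ is representation-infinite. By Theorem~2.2 and the construction in Section~2.2 I may write $B=\End_{\mathcal{C}}T$ for a cluster-tilting object $T=T_1\oplus\cdots\oplus T_n$ in the cluster category $\mathcal{C}=\mathcal{C}_{kQ}$ of a connected representation-infinite hereditary algebra $kQ$. Recall that the functor $\Hom_{\mathcal{C}}(T,-)$ is full and dense and identifies the indecomposable $B$-modules with the indecomposable objects of $\mathcal{C}$ other than the $n$ summands of $\tau_{\mathcal{C}}T$. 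Under this identification the transjective component $\mathcal{T}$ of $\Gamma(\mathop{\text{mod}}B)$ is the image of the transjective component of $\mathcal{C}$, which as a translation quiver has the stable shape $\mathbb{Z}Q$, with a finite set $F$ of vertices deleted — namely those $\tau_{\mathcal{C}}T_i$ that are transjective — and with the meshes along $F$ modified. These modifications are precisely what produce the projective and injective $B$-modules sitting inside $\mathcal{T}$.

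Next I would translate the notion of local slice into this picture. By Theorems~\ref{tiltslice} and~\ref{clusterlocal} the local slices of $\mathop{\text{mod}}B$ are exactly the images of the complete slices of the tilted algebras $C$ with $B=C\ltimes\Ext_C^2(DC,C)$; each is a section of $\mathcal{T}$ satisfying the axioms of Definition~2.8, and in $\mathcal{C}$ it lifts to a section of the $\mathbb{Z}Q$-component. In the ambient $\mathbb{Z}Q$ every vertex lies on a section, and by Proposition~\ref{Same} the translation $\tau_B$ agrees with $\tau_{\mathcal{C}}$ along such a section; consequently any section of $\mathbb{Z}Q$ that stays clear of (a bounded neighbourhood of) the deleted set $F$ descends to an honest local slice of $\mathop{\text{mod}}B$. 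The problem thus becomes purely combinatorial: bound the set of vertices of $\mathcal{T}$ through which every section is forced to pass near $F$.

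For the counting step I would put coordinates $(i,q)\in\mathbb{Z}\times Q_0$ on $\mathbb{Z}Q$ with $\tau_{\mathcal{C}}(i,q)=(i-1,q)$, so that along any arrow the first coordinate changes by at most $1$ and the finite set $F$ (together with its neighbours) lies in a slab $\{|i|\le N\}$. Since a section is a connected full subquiver on $|Q_0|$ vertices, a section through $(i_0,q_0)$ is contained in the slab $\{|i-i_0|\le d\}$, where $d$ is the diameter of $Q$. Hence whenever $|i_0|>N+d$ every section through $(i_0,q_0)$ avoids $F$ and so is a local slice, which places $(i_0,q_0)$ on a local slice. Therefore the indecomposable transjective $B$-modules lying on no local slice are confined to the slab $\{|i|\le N+d\}$, which contains at most $(2N+2d+1)\,|Q_0|$ vertices; in particular there are only finitely many of them, as required.

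The main obstacle is the second paragraph: making rigorous the dictionary between sections of $\mathbb{Z}Q$ avoiding $F$ and local slices of $\mathop{\text{mod}}B$. This requires comparing the mesh structure and the translation $\tau_B$ of $\mathop{\text{mod}}B$ with those of $\mathcal{C}$ exactly on the complement of the modified region — Proposition~\ref{Same} supplies this along a single slice, and it must be propagated to all neighbouring sections — and then checking that, away from $F$, the four conditions of Definition~2.8 are equivalent to being a section of the stable quiver $\mathbb{Z}Q$. Once this correspondence is established, the coordinate estimate of the third paragraph closes the argument.
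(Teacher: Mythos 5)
First, a point of order: the paper does not prove Proposition~\ref{local} at all --- it is quoted verbatim from \cite[Corollary~3.8]{ASS1} --- so your attempt can only be compared with the published proof there. Your outline is essentially sound and captures the same underlying geometry, but it is organized differently from \cite{ASS1}: there the authors first establish a module-theoretic characterization, in terms of Hom-vanishing conditions in the cluster category relative to the cluster-tilting object $T$, of which indecomposable transjective modules lie on a local slice, and then deduce finiteness because failure of those conditions confines the lift $\widetilde{M}$ to the intersection of a forward and a backward Hom-cone around the finitely many transjective summands of $\tau_{\mathcal{C}}T$, and such an intersection in the $\mathbb{Z}Q$-component is finite. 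Your slab estimate is the combinatorial shadow of that cone argument; what your route buys is elementarity (no Hom computations), while the route of \cite{ASS1} yields the sharper characterization of modules on local slices as a byproduct.

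Two corrections are needed to close your second paragraph, and both are available. First, Proposition~\ref{Same} compares $\tau_C$ with $\tau_B$ for a module on a slice of a \emph{tilted} algebra $C$; it says nothing about $\tau_{\mathcal{C}}$ versus $\tau_B$. What you actually need is the result of \cite{BMR} that $\Hom_{\mathcal{C}}(T,-)$ carries Auslander--Reiten triangles whose terms avoid $\add \tau_{\mathcal{C}}T$ to Auslander--Reiten sequences in $\mathop{\text{mod}}B$, so that mesh structure and translation agree away from the deleted vertices; note also that your identification of the transjective component of $\Gamma(\mathop{\text{mod}}B)$ with the image of the $\mathbb{Z}Q$-component rests on \cite{BMR} and \cite{ABS2}, not on anything proved in the present paper. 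Second, your ``dictionary lemma'' becomes routine, and the diameter bound $d$ unnecessary, if you use only the \emph{vertical} sections: through any vertex $(i_0,q_0)$ of $\mathbb{Z}Q$ take $\Sigma_{i_0}=\{(i_0,q): q\in Q_0\}$, a copy of $Q$. This subquiver is convex, since every arrow of $\mathbb{Z}Q$ either preserves or increases the first coordinate, so a path between two of its vertices can never leave the level $i_0$; hence condition (3) of the definition of local slice (sectional paths) is immediate, conditions (1) and (2) are a one-line check of the mesh structure, and condition (4) holds because $|\Sigma_{i_0}|=|Q_0|=n$. Provided $\Sigma_{i_0}$ together with the meshes touching it avoids the finite deleted set $F$ --- which happens as soon as $|i_0|>N+1$ --- these verifications transfer verbatim to $\Gamma(\mathop{\text{mod}}B)$, and your counting argument then finishes the proof with the better bound $(2N+3)\,|Q_0|$. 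With these repairs the proposal is a correct, self-contained alternative to the argument of \cite{ASS1}.
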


 \subsection{$\tau$-tilting finite algebras}
Following $\cite{AIR}$ we state the following definition.
\begin{mydef}  A $C$-module $M$ is $\tau_C$-$\emph{rigid}$ if $\text{Hom}_C(M,\tau_CM)=0$.  A $\tau_C$-rigid module $M$ is $\tau_C$-$\emph{tilting}$ if the number of pairwise, non-isomorphic, indecomposable summands of $M$ equals the number of isomorphism classes of simple $C$-modules.
 \end{mydef}  
It follows from the Auslander-Reiten formulas that any $\tau_C$-rigid module is rigid and the converse holds if the projective dimension is at most 1.  In particular, any partial tilting module is a $\tau_C$-rigid module, and any tilting module is a $\tau_C$-tilting module.  Thus, we can regard $\tau_C$-tilting theory as a generalization of  classic tilting theory. 
Following $\cite{DIJ}$, we have the following definition.
\begin{mydef}
Let $A$ be a finite dimensional algebra. We say that $A$ is $\tau_A$-$\it{tilting~finite}$ if there are only finitely many isomorphism classes of basic $\tau_A$-tilting $A$-modules.
\end{mydef}
The authors provide several equivalent conditions for an algebra $A$ to be $\tau_A$-tilting finite.  In particular, we need the following.
\begin{lemma}$\emph{\cite[Corollary~2.9.]{DIJ}}$
\label{tau-tilt}
$A$ is $\tau_A$-tilting finite if and only if there are only finitely many isomorphism classes of indecomposable $\tau_A$-rigid $A$-modules.
\end{lemma}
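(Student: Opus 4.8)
The plan is to prove the two implications of the equivalence separately. The reverse implication (finitely many indecomposable $\tau_A$-rigid modules $\Rightarrow$ $A$ is $\tau_A$-tilting finite) is elementary and amounts to a counting bound, whereas the forward implication carries all the content and rests on the completion theory of Adachi--Iyama--Reiten in \cite{AIR}.

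For the reverse implication I would first record the elementary fact that any direct summand of a $\tau_A$-rigid module is again $\tau_A$-rigid: if $M=M_1\oplus M_2$, then $\tau_A M=\tau_A M_1\oplus\tau_A M_2$ and $\Hom_A(M_1,\tau_A M_1)$ is a direct summand of $\Hom_A(M,\tau_A M)$, which vanishes. Consequently every basic $\tau_A$-tilting module is, by definition, a direct sum of exactly $n$ pairwise non-isomorphic indecomposable $\tau_A$-rigid modules, where $n$ denotes the number of isomorphism classes of simple $A$-modules. Thus a basic $\tau_A$-tilting module is completely determined by the $n$-element set of indecomposable $\tau_A$-rigid modules occurring as its summands. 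If there are only finitely many, say $N$, indecomposable $\tau_A$-rigid modules up to isomorphism, there can be at most $\binom{N}{n}$ basic $\tau_A$-tilting modules, so $A$ is $\tau_A$-tilting finite.

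For the forward implication I would use the key structural input that every $\tau_A$-rigid $A$-module is a direct summand of some $\tau_A$-tilting $A$-module; this is the $\tau$-tilting analogue of Bongartz completion from \cite{AIR}. Granting it, let $M$ be any indecomposable $\tau_A$-rigid module. Then $M$ is a summand of some $\tau_A$-tilting module, and hence of a basic one (discard repeated summands; the indecomposable $M$ remains a summand). By hypothesis there are only finitely many basic $\tau_A$-tilting modules, each having at most $n$ indecomposable summands, so the collection of all indecomposable modules arising as a summand of some basic $\tau_A$-tilting module is finite. Every indecomposable $\tau_A$-rigid module lies in this finite collection, which gives the claim.

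I expect the main obstacle to be the completion theorem itself: the assertion that a $\tau_A$-rigid module always occurs as a summand of a genuine (rank $n$) $\tau_A$-tilting module, and not merely of a support $\tau_A$-tilting module. If one preferred not to cite this directly, the natural route is to reconstruct the $\tau$-tilting Bongartz completion of \cite{AIR} via a universal extension of $A$ and to verify that the resulting module is $\tau_A$-tilting with the given module as a summand. Once this input is available both directions are formal bookkeeping, and the argument uses none of the tilted or cluster-tilted structure appearing elsewhere in the paper.
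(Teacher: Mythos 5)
Your proof is correct. The paper offers no proof of this lemma at all---it is quoted directly from [DIJ, Corollary 2.9] and used as a black box---so there is no internal argument to compare against; your argument (elementary counting of summands for one implication, the Bongartz-type completion theorem of Adachi--Iyama--Reiten for the other) is precisely the standard proof of this equivalence, and you correctly isolate the completion statement, that every $\tau_A$-rigid module is a direct summand of a genuine $\tau_A$-tilting module, as the only non-formal input.
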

\subsection{A criterion for representation-finiteness}
We will need the following criterion for an algebra to be representation-finite.
\begin{theorem}$\emph{\cite[IV~Theorem~5.4.]{ASS}}$
\label{finite}
Assume $A$ is a basic and connected finite dimensional algebra.  If $\Gamma(\mathop{\emph{mod}}A)$ admits a  finite connected component $\mathcal{C}$, then $\mathcal{C}=\Gamma(\mathop{\emph{mod}}A)$.  In particular, $A$ is representation-finite.
\end{theorem}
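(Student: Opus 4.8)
The plan is to reduce the statement to a single separation property and then exploit the connectedness of $A$ together with the fact that projective modules generate $\mathop{\text{mod}}A$. Recall first that a connected component $\mathcal{C}$ of $\Gamma(\mathop{\text{mod}}A)$ is, by construction, closed under the Auslander--Reiten translations $\tau_A,\tau_A^{-1}$ and under passing to the source or target of an irreducible morphism; in particular, for a non-projective $X\in\mathcal{C}$ the middle term of the almost split sequence ending at $X$ lies in $\add\mathcal{C}$, and dually for non-injective $X$. The key reduction is the following separation claim: for every $X\in\mathcal{C}$ and every indecomposable $N\in\mathop{\text{ind}}A$ with $N\notin\mathcal{C}$ one has $\Hom_A(X,N)=0=\Hom_A(N,X)$.

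Proving this claim is the main obstacle. Since $X$ and $N$ lie in distinct components, no composition of irreducible morphisms connects them, so every map between them belongs to the infinite radical $\operatorname{rad}^{\infty}(\mathop{\text{mod}}A)$. Because $\mathcal{C}$ is finite, the lengths of its modules are bounded by some integer $b$. Given a nonzero $f\in\operatorname{rad}^{\infty}(X,N)$, I would factor $f$ successively through the minimal left almost split (source) maps of the indecomposable summands encountered; since the codomains of these source maps are exactly the $\Gamma$-neighbours of the module, they again lie in $\add\mathcal{C}$, so the factorisation never leaves the component. This produces arbitrarily long compositions of irreducible morphisms between indecomposables of length at most $b$ inside $\mathcal{C}$, and by the Harada--Sai lemma any such composition of more than $2^{b}$ maps vanishes, forcing $f=0$; the dual factorisation through sink maps gives $\Hom_A(N,X)=0$. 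The delicate point, requiring the most care, is the bookkeeping that keeps each factorisation inside a controlled radical layer, so that the growing compositions are genuine length-increasing paths of irreducible morphisms in the bounded component to which Harada--Sai applies.

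Granting the separation claim, the conclusion follows quickly. The indecomposable projective $A$-modules are partitioned into those lying in $\mathcal{C}$ and those not, and the claim shows there are no nonzero homomorphisms between the two families in either direction. Writing $e$ for the sum of the primitive idempotents corresponding to the projectives in $\mathcal{C}$ and setting $e'=1-e$, the vanishing $eAe'=0=e'Ae$ makes $e$ a central idempotent, so $A\cong eAe\times e'Ae'$; connectedness of $A$ forces $e\in\{0,1\}$. The value $e=0$ is impossible, since every nonzero module---in particular any $X\in\mathcal{C}$---admits a nonzero map from a projective, so $\mathcal{C}$ must contain a projective. Hence $e=1$ and every indecomposable projective lies in $\mathcal{C}$. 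Finally, if some indecomposable $N$ lay outside $\mathcal{C}$, the separation claim would give $\Hom_A(P,N)=0$ for all projectives $P$, contradicting the surjectivity of a projective cover $P\twoheadrightarrow N$. Therefore $\mathcal{C}=\Gamma(\mathop{\text{mod}}A)$, and since $\mathcal{C}$ is finite, $A$ is representation-finite.
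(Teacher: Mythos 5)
Your proof is correct and takes essentially the same route as the source of this statement: the paper itself gives no proof, citing \cite[IV~Theorem~5.4]{ASS}, and the textbook argument is exactly your combination of the separation claim (iterated factorisation through almost split maps, killed after boundedly many steps by the Harada--Sai lemma on the length-bounded finite component) with the connectedness argument forcing all indecomposable projectives, and then all indecomposables, into $\mathcal{C}$. Your only cosmetic deviation is phrasing the connectedness step via the central idempotent decomposition $A\cong eAe\times e'Ae'$ rather than via connecting chains of nonzero maps between projectives, which is an equivalent formulation; the appeal to $\operatorname{rad}^{\infty}$ at the start is dispensable, since your direct factorisation argument already does the work.
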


\section{Main Result}
We are now ready to prove our main theorem.
\begin{theorem}
Let $B$ be a cluster-tilted algebra.  Then $B$ is $\tau_B$-tilting finite if and only if $B$ is representation-finite.
\end{theorem}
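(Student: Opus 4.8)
The plan is to prove the two implications separately, treating the forward direction as routine and concentrating all the work on the converse. If $B$ is representation-finite, then $\mathop{\text{ind}}B$ is a finite set, so there can be only finitely many isomorphism classes of indecomposable $\tau_B$-rigid modules; by Lemma \ref{tau-tilt} this forces $B$ to be $\tau_B$-tilting finite. This direction uses nothing special about cluster-tilted algebras and holds for any finite dimensional algebra.

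For the converse I would argue by contraposition: assuming $B$ is representation-infinite, I will exhibit infinitely many pairwise non-isomorphic indecomposable $\tau_B$-rigid modules, which by Lemma \ref{tau-tilt} shows $B$ is not $\tau_B$-tilting finite. First, since $B$ is representation-infinite, Theorem \ref{finite} guarantees that no connected component of $\Gamma(\mathop{\text{mod}}B)$ can be finite; in particular the transjective component is infinite. Next, Proposition \ref{local} tells us that only finitely many indecomposable transjective modules fail to lie on a local slice, so infinitely many indecomposable transjective $B$-modules do lie on local slices. It therefore suffices to prove the key claim: every indecomposable $B$-module lying on a local slice is $\tau_B$-rigid.

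To establish the claim, fix a tilted algebra $C$ with $B=C\ltimes E$ and $E=\Ext_C^2(DC,C)$, and let $M$ lie on a local slice of $\mathop{\text{mod}}B$. By Theorem \ref{clusterlocal} this local slice is the image, under the full and faithful embedding $\mathop{\text{mod}}C\hookrightarrow\mathop{\text{mod}}B$ induced by the surjection $B\twoheadrightarrow B/E\cong C$, of a slice $\Sigma$ in $\mathop{\text{mod}}C$; thus $M$ is identified with a module of $\Sigma$. It is well known that the direct sum of the modules lying on a slice of a tilted algebra is a tilting $C$-module, so $M$ is a summand of a tilting module, hence a partial tilting module, and therefore $\tau_C$-rigid, i.e. $\Hom_C(M,\tau_C M)=0$. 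Now Proposition \ref{Same} gives $\tau_B M\cong\tau_C M$, and since both $M$ and $\tau_C M$ are $C$-modules (annihilated by $E$), the full faithfulness of the embedding yields
\[
\Hom_B(M,\tau_B M)\cong\Hom_B(M,\tau_C M)\cong\Hom_C(M,\tau_C M)=0,
\]
so $M$ is $\tau_B$-rigid, which proves the claim and completes the argument.

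I expect the main obstacle to be precisely the key claim, and within it the transfer of $\tau$-rigidity from $C$ to $B$: one must be careful that the Auslander-Reiten translate computed in $\mathop{\text{mod}}B$ really agrees with the one computed in $\mathop{\text{mod}}C$, which is exactly the content of Proposition \ref{Same}, and that the relevant $\Hom$-spaces are unchanged under the embedding. The representation-theoretic input that slice modules over a tilted algebra are (partial) tilting, hence $\tau_C$-rigid, is standard but should be invoked carefully, while the reduction to the transjective component via Theorem \ref{finite} and Proposition \ref{local} is what guarantees that infinitely many such $\tau_B$-rigid modules are actually available.
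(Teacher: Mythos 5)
Your proof is correct and follows essentially the same route as the paper: an infinite transjective component via Theorem \ref{finite}, infinitely many indecomposables on local slices via Proposition \ref{local}, identification with slice modules over a tilted algebra via Theorem \ref{clusterlocal}, and transfer of rigidity via Proposition \ref{Same} and Lemma \ref{tau-tilt}. The only minor difference is that you obtain $\tau_C$-rigidity of a slice module from its being a direct summand of a tilting $C$-module, whereas the paper deduces it directly from axioms (2) and (3) of the slice definition; both are standard, and your explicit justification that $\Hom_B(M,\tau_BM)\cong\Hom_C(M,\tau_CM)$ via the embedding $\mathop{\text{mod}}C\hookrightarrow\mathop{\text{mod}}B$ is in fact slightly more careful than the paper's.
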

\begin{proof}
The sufficiency is obvious so we prove the necessity.  Assume $B$ is $\tau_B$-tilting finite but representation-infinite.  By Theorems $\ref{tiltslice}$ and $\ref{clusterlocal}$, we know the transjective component of $\Gamma(\mathop{\text{mod}}B)$ exists.  Since $B$ is representation-infinite, Theorem $\ref{finite}$ guarantees the transjective component must be infinite.  By Proposition $\ref{local}$ and the fact that the transjective component is infinite, we must have an infinite number of indecomposable transjective $B$-modules which lie on a local slice. Let $M$ be such a $B$-module.  Theorem $\ref{clusterlocal}$ guarantees there exists a tilted algebra $C$ and a slice $\Sigma$ such that $M$ is a $C$-module and $M\in\Sigma$.  It follows from parts $(2)$ and $(3)$ of the definition of a slice that $M$ is $\tau_C$-rigid.  By Proposition $\ref{Same}$, we know $\tau_CM\cong\tau_BM$.  This implies $M$ is $\tau_B$-rigid.  Since $M$ was arbitrary, we have shown there exists an infinite number of indecomposable transjective $B$-modules which are $\tau_B$-rigid.  This is a contradiction to our assumption that $B$ was $\tau_B$-tilting finite and Lemma $\ref{tau-tilt}$.  We conclude $B$ must be representation-finite.

\end{proof}

\noindent Department of Mathematics, University of Connecticut-Waterbury, Waterbury, CT 06702, USA
\\
\it{E-mail address}: \bf{stephen.zito@uconn.edu}

\end{document}